  \newlength{\defbaselineskip}
  \newcommand{\setlinespacing}[1]%
                               {\setlenght{\baselineskip}{#1 \defbaselineskip}}
  \newcommand{\dis}{\displaystyle}
  \theoremstyle{plain}
  \newtheorem{thm}{Theorem}[section]
  \newtheorem{cor}[thm]{Corollary}
  \newtheorem{pro}[thm]{Proposition}
  \newtheorem{rem}[thm]{Remark}
  \theoremstyle{definition}
  \newtheorem{defi}[thm]{Definition}
  \newtheorem{exm}[thm]{Example}
  \newtheorem{notation}[thm]{Notation}
\numberwithin{equation}{section}
\begin{document}
\begin{center}
{\bf  Limit Theorems for Numerical Index}
\end{center}
\vspace{.15 cm}
\begin{center}
\small{Asuman G\"{u}ven AKSOY and Grzegorz LEWICKI}
\end{center}

  \date{May 20, 2010}
\mbox{~~~}\\
\mbox{~~~}\\
\small\mbox{~~~~}{\bf Abstract.} {\footnotesize  We improve upon on a limit theorem for numerical index for large classes of Banach spaces including vector valued $\ell_p$-spaces and $\ell_p$-sums of Banach spaces where\\ $1\leq p \leq \infty$. We first prove $ n_1( X) = \displaystyle \lim_m n_1( X_m)$ for a modified numerical index $n_1(\, .\, )$. Later, we establish if a norm on $X$ satisfies the local characterization condition, then $n(X) = \displaystyle\lim_m n(X_m).$  We also present an example of a Banach space where the local characterization condition is satisfied.

\footnotetext{{\bf Mathematics Subject Classification (2000):}
41A35, 41A65, 47A12, 47H10. \vskip1mm {\bf Key words: } Numerical index, Numerical
radius, Local characterization condition.}

\section{ Introduction}
Let $X$ be a Banach space over $\mathbb{R}$ or $\mathbb{C}$. We write $B_{X}$ for
the closed unit ball and $S_{X}$ for the unit sphere of $X$.
 The dual space is denoted by $X^{*}$ and the Banach algebra of all
continuous linear operators on $X$ is denoted by $B(X)$.
\begin{defi}
The \textit{numerical range} of $T\in B(X)$ is defined by
$$W(T)= \{ x^{*}(Tx)  :~x\in S_{X},~x^{*}\in S_{X^{*}},~x^{*}(x)=1\}\cdot$$
The \textit{numerical radius} of $T$ is then given by
$$\nu (T)=\sup\{\vert \lambda\vert : ~\lambda\in W(T)\}\cdot$$
\end{defi}
Clearly, $\nu(.)$ is a semi-norm on $B(X)$ and $ \nu(T) \le \Vert T\Vert$ for
all $T\in B(X)$.
 The \textit{numerical index} of $X$ is defined by
$$n(X)=\inf\{\nu(T) :~ T\in S_{B(X)}\}\cdot$$
Equivalently, the numerical index $n(X)$ is the greatest constant $k
\geq 0$ such that  $k\|T\| \leq \nu(T)$ for every $T \in B(X)$. The concept of numerical index was first introduced by Lumer
\cite{lg} in 1968. Since then, much attention has been paid to this
 equivalence constant between the numerical radius and the usual
norm in the Banach algebra of all bounded linear operators of a
Banach space. It is known that  $0 \leq n(X) \leq 1$ if $X$ is a real space, and $\displaystyle\frac{1}{e}\leq n(X) \leq 1$ if $X$ is a complex space. Furthermore, $n(X)> 0$ if and only if $\nu (.)$ and $\|\cdot\|$ are equivalent norms. Calculation of numerical index for some classical Banach spaces can be found in  \cite{bff-dj1} and \cite{bff-dj2}. For more recent results we refer the reader to
\cite{aag-cbl}, \cite {aag-gl}, \cite{aga-ed-kham}, \cite{ee},
\cite{fc-mm-pr}, \cite{gke-rdkm}, \cite{lg-mm-pr}, \cite{mm} and \cite {mm-jm-mp-br}. In \cite{ee-2} it is shown that $$ n(\ell_p) = n(L_p[0,1])$$ for a fixed $1<p< \infty$. In the same paper it is also established that $n(\ell_{p}^{m})\neq 0$ for finite $m$ in the real case. In \cite{aga-ed-kham} numerical index of vector-valued function spaces is considered  and a proof of $$ n(L_p(\mu, X)) = \displaystyle \lim_m n( (\ell_p^m (X))$$ is provided for a Banach space $X$ and for $1 \leq p < \infty$. Furthermore, it was recently proven in \cite{mm-jm-mp} that for $p\neq 2$ and $\mu$ any positive measure one has  $n(L_p(\mu)) > 0$ in the real case.
 In this paper we first obtain the above type of limit theorem for a class of Banach spaces including vector valued $\ell_p$ or $L_p$ spaces. We do this in two steps. First, we modify the definition of numerical index and show that $$ n_1( X ) = \displaystyle \lim_m n_1( X_m)$$  where $n_1$ is the modified numerical index, and verify that $n_1= n$ for all the examples of spaces considered in this paper. However, our main result is an improvement of limit theorem presented in \cite{mm-jm-mp-br}. The study of numerical index of absolute sums of Banach spaces is given in \cite{mm-jm-mp-br}, where under suitable conditions it is shown that the numerical index of a sum is greater or equal to the limsup  of the numerical index of the summands. (See Theorem 5.1 of \cite{mm-jm-mp-br}.) In this paper, we show the liminf of the numerical index of the summands is greater or equal to the numerical index of the sum if the Banach space satisfies a condition called the local characterization condition(LCC).
 We establish : if a norm on $X$ satisfies the local characterization condition, then $$n(X) = \displaystyle\lim_m n(X_m).$$  We also provide examples of spaces where (LCC) is satisfied.

\section{Main Results }
\begin{notation}
\label{nota}
Let $X$ be a Banach space and let for $m\in \mathbb{N}$, $X_m$ denote a closed  (not necessarily finite dimensional) subspaces of $X$. Throughout this paper, unless otherwise stated, we assume:
\begin{enumerate}
\item $X = \dis \overline{\bigcup_{m=1}^{\infty}X_m}$ and  $X_{m} \subset X_{m+1}$ for all $m\in \mathbb{N}$.
\item For any $m\in \mathbb{N}$, there exists $P_m \in \mathcal{P}(X, X_m)$ with $\|P_m \| = 1$, where $\mathcal{P}(X,X_m)$ is the set of all linear projections continuous with respect to the operator norm.
\item For any $x\in X $ and  $j\geq m$ we have $P_m(P_j x)=P_m x$.
\end{enumerate}
\end{notation}
\begin{defi}
\label{modifydef}
Let $X$, $X_m$ and $P_m$ be defined as above. For $L \in \mathcal{L}(X_m)$, \emph{modified numerical index} $n_1(L)$ is defined as
$$ n_1(L) = \mbox{sup} \{ |x^* P_jLP_jx|: \,\, j\in \{1,2\ldots, m\},  \,\, x^* \in S_{X^*}, x\in S_X, x^*(x)=1 \}$$
Furthermore,
 $$
n_1(X_m)=\mbox{inf} \{n_1(L): \,\,\, L\in \mathcal{L}(X_m),\,\,\, \|L\|=1 \,\}.
$$
However, if $L \in \mathcal{L}(X)$, then the above definition takes the form
$$
n_1(L) = \mbox{sup} \{ |x^* P_jLP_jx|: \,\, j\in \mathbb{N},  \,\, x^* \in S_{X^*}, x\in S_X, x^*(x)=1\}
$$
and similarly
$$
n_1(X)=\mbox{inf} \{n_1(L): \,\,\, L\in \mathcal{L}(X),\,\,\, \|L\|=1 \,\}.
$$
\end{defi}
\begin{thm}
\label{modifythm}
Let $X$ and $X_m$ be as in the Notation 2.1. Then $$ n_1( X) = \displaystyle \lim_m  n_1( X_m).$$
\end{thm}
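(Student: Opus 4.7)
The plan is to sandwich the limit $\lim_m n_1(X_m)$ by showing $n_1(X) \leq n_1(X_m)$ for every $m$ (hence $n_1(X) \leq \liminf_m n_1(X_m)$) and, separately, $\limsup_m n_1(X_m) \leq n_1(X)$. Both halves rely on the two natural transfers between $\mathcal{L}(X_m)$ and $\mathcal{L}(X)$: embedding an operator on $X_m$ into $\mathcal{L}(X)$ by postcomposing with $P_m$, and compressing an operator on $X$ onto $\mathcal{L}(X_m)$ by sandwiching with $P_m$. Throughout, I will use the full strength of Notation~\ref{nota}: condition (3) gives $P_m P_j = P_m$ for $j \geq m$, while the fact that $P_j$ is a projection onto $X_j \subset X_m$ yields $P_m P_j = P_j$ for $j \leq m$.

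For the first inequality, I would take $L \in \mathcal{L}(X_m)$ with $\|L\|=1$ and define $\tilde{L} = L P_m \in \mathcal{L}(X)$; since $\tilde{L}|_{X_m} = L$ and $\|P_m\|=1$, we still have $\|\tilde{L}\| = 1$. A direct computation shows $P_j \tilde{L} P_j = P_j L P_j$ for $j \leq m$, while for $j > m$ one gets $P_j \tilde{L} P_j = L P_m$, which coincides with the $j=m$ contribution already accounted for in $n_1(L)$ (because $P_m L P_m = L P_m$, since $L P_m$ lands in $X_m$). Therefore $n_1(\tilde{L}) = n_1(L)$; taking the infimum over $L \in S_{\mathcal{L}(X_m)}$ gives $n_1(X) \leq n_1(X_m)$.

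For the reverse direction, I would take $L \in \mathcal{L}(X)$ with $\|L\|=1$ and set $L_m := (P_m L P_m)|_{X_m} \in \mathcal{L}(X_m)$. Condition (1) together with $\|P_m\|=1$ yields $P_m \to I_X$ strongly, and the bound
\[
\|P_m L P_m x - L x\| \leq \|L\|\,\|P_m x - x\| + \|P_m L x - L x\|
\]
gives $\|L_m\| \to 1$. Using $P_j P_m = P_j$ for $j \leq m$ twice, one finds $P_j L_m P_j = P_j L P_j$ for all $j \leq m$; since the sup defining $n_1(L_m)$ ranges only over $j \leq m$ whereas the sup for $n_1(L)$ ranges over all $j \in \mathbb{N}$, we conclude $n_1(L_m) \leq n_1(L)$. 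Normalizing to $L_m/\|L_m\|$ (valid for all sufficiently large $m$), the definition of $n_1(X_m)$ and homogeneity of $n_1(\cdot)$ yield $n_1(X_m) \leq n_1(L)/\|L_m\|$, whence $\limsup_m n_1(X_m) \leq n_1(L)$; an infimum over $L$ completes the argument.

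The main obstacle, though modest, is the projection bookkeeping: the identities $P_m P_j = P_m$ for $j \geq m$ and $P_m P_j = P_j$ for $j \leq m$ must simultaneously line up to identify $n_1(\tilde{L})$ with $n_1(L)$ on the embedding side and to bound $n_1(L_m)$ by $n_1(L)$ on the compression side. The one analytic input beyond this algebraic manipulation is the strong convergence $P_m \to I_X$, which is routine given the density assumption $X = \overline{\bigcup_m X_m}$ and the uniform bound $\|P_m\|=1$.
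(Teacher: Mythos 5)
Your proposal is correct and follows essentially the same route as the paper: the embedding $L\mapsto L\circ P_m$ for the inequality $n_1(X)\leq n_1(X_m)$, and the compression $L\mapsto (P_mLP_m)|_{X_m}$ (which coincides with the paper's $P_m\circ L|_{X_m}$) together with the strong convergence $P_m\to I_X$ and normalization for the reverse inequality. The projection bookkeeping you describe ($P_mP_j=P_m$ for $j\geq m$, $P_mP_j=P_j$ for $j\leq m$) is exactly what the paper uses.
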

\begin{proof} Consider $L\in \mathcal{L}(X_m)$ where $m\in \mathbb{N}$ is fixed. Assume $\|L\|=1$ and define
$$
L_m =L\circ P_m \in \mathcal{L}(X).
$$
Since $\|P_m\|=1$, we have $\|L_m \| =\|L\|$.  We first claim that $n_1(L)=n_1(L_m)$. Let $j \in \{1,...,m\}.$ Observe that $P_jx \in X_m$ since $X_j \subset X_m.$ By definition
$$
n_1(L) = \mbox{sup} \{ |x^* P_jLP_mP_jx|: \,\, j=\{1,2\ldots, m\} \,\, x^* \in S_{X^*}, x\in S_X, x^*(x)=1\}
\leq n_1(L_m).
$$
Now, suppose $j> m$, then by  Notation(\ref{nota}), Part 3 above we know $P_m(P_j x)=P_m x$. Thus
$$ |x^* P_j L P_m P_j x |= |x^* P_jL P_m x|=|x^* L P_m x|= |x^* P_m L P_m x| ,
$$
and therefore
$$
n_1(L_m)=\mbox{sup} \{ |x^* P_j L P_j x|:\,\, j \in \{1,...,m\},  \,\, x^* \in S_{X^*}, x\in S_X,x^*(x)=1\}= n_1(L)
$$
 as claimed.
\\
Now, since we have $n_1(L) =n_1(L_m) \geq n_1(X)$ for any $L \in  \mathcal{L}(X_m) $ with $\|L \|=1$ and for any $m\in \mathbb{N}$, taking infimum over $L$ yields $n_1(X_m)\geq n_1(X)$ which in turn implies $\liminf_m n_1(X_m)\geq n_1(X)$.
\\
To prove $\limsup_m n_1(X_m)\leq n_1(X)$, we start with
$S \in \mathcal{L}(X)$ with $\|S\| =1$ and for $m\in \mathbb{N}$ define $S_m = P_m \circ S|_{X_m} \in \mathcal{L}(X_m)$. We know $\|S_m\|\leq \|S\|$, but we claim $\|S_m\|\rightarrow \|S\|=1$. To show this let $\epsilon > 0$ and $x\in X$ with $\|x\|=1$, note that $\|S x\|> \|S\|-\epsilon$, but need to show
$$
\|P_m S P_m x \|\rightarrow \|Sx\|\,\,\,\,\mbox{as}\,\,\, m\rightarrow\infty .
$$
Consider,
$$
|\|P_m S P_m x \|- \|Sx\||\leq \|P_m S P_m x - Sx\|\leq \|P_m S P_m x-P_m S x \|+\|P_m S x -S x\|
$$
and therefore
$$
|\|P_m S P_m x \|- \|Sx\||\leq \|P_m\| \|S(P_mx)-Sx \|+ \|P_m(Sx)-Sx \|.
$$
Now, for any $ z \in  \dis \bigcup_{m=1}^{\infty}X_m$ , $P_m z\rightarrow z$. Since $\|P_m\| =1$ and $X = \dis \overline{\bigcup_{m=1}^{\infty}X_m}$, thus applying the Banach -Steinhaus theorem yields, $\|P_m(Sx)-Sx \|\rightarrow 0$ and continuity of $S$ implies $\|S(P_mx)-Sx \|\rightarrow 0$. Since we proved $\displaystyle\mbox{lim}_{m} \| P_m S P_m x \|\rightarrow \|Sx\|\geq \|S \|-\epsilon$, we also have $\mbox{lim inf}_m\|P_m S \mid_{X_m}\|\geq \|S\| -\epsilon$, for an arbitrary $\epsilon$. It follows that
$$
\|P_m S \mid_{X_m}\| \leq \|S\mid_{X_m}\| \leq \|S\|
$$
and thus $\|S\| \geq\mbox{lim sup}_m \|P_m S \mid_{X_m}\|$. Combining together,
$$
\mbox{lim inf}_m\|P_m S \mid_{X_m}\|\geq \|S\|\geq \mbox{lim sup}_m \|P_m S \mid_{X_m}\|
$$
this concludes the proof of the claim that $\|S_m\|\rightarrow \|S\|=1$. Consider, $S\in \mathcal{L}(X)$ with $\|S\|=1$, and $n_1(S_m)=n_1(P_m S\mid_{X_m} )$ where
$$
n_1(P_m S\mid_{X_m}) =\mbox{sup}\{ | x^* P_j P_m S\mid_{X_m} P_j x|:\,\, j\in \{1,2\ldots, m\}  \,\, x^* \in S_{X^*}, x\in S_X \,\, x^*(x)=1\}.
$$
By assumption we have $P_j(P_m z)=P_jz $ when $m\geq j$ for all $z\in X$, therefore,
$$
|x^* P_j P_m S\mid_{X_m} P_j x|=|x^* P_j  S P_j x|\,\,\,\mbox{and}\,\,\, j\in \{1,2\ldots, m\}
$$
and we have:
$$
n_1( P_m S\mid_{X_m}) \leq \mbox{sup}\{ | x^* P_j S P_j x| :\, j\in \mathbb{N},x\in S_X, x^* \in S_{X^*}, x^*(x)=1\}=n_1(S).
$$
From the above argument $\mbox{lim sup}_m n_1(S_m) \leq n_1(S)$, and since $\|S_m\| \rightarrow \|S\|=1$, it follows that $\mbox{lim sup}_m  n_1(\displaystyle\frac{S_m}{\|S_m\|}) \leq n_1(S)$ and therefore, $\mbox{lim sup}_m n_1(\displaystyle \frac{S_m}{\|S_m\|}) \geq \mbox{lim sup}_m n_1(X_m)$. Since we have shown that
$$
\mbox{lim sup}_m n_1(X_m) \leq n_1(S) \,\,\,\mbox{for any }\,\,\,\, \|S\| =1, \,\,\, S \in \mathcal{L}(X),
$$
taking infimum over $S$ yields $$ \mbox{lim sup}_m n_1(X_m)\leq n_1(X),$$ which completes the proof.
\end{proof}
\begin{rem}
Let
$$
F = \{ \{x_n\}, x_n \in \mathbb{R} \mbox{ and }x_n = 0 \mbox{ for } n \geq m \mbox{ depending on } \{ x_n\} \}.
$$
Let $\|\cdot \|$ be any norm on $F$ satisfying
$$
\|(x_1,x_2,...,)\| \leq \|(y_1,y_2,...)\| \mbox{ provided } |x_i| \leq |y_i| \mbox{ for } i \in \mathbb{N}.
$$
Let $ X $ denote the completion of $F$ with respect to $ \| \cdot \|.$  If for $m \in \mathbb{N}$
$$
X_m = \{ \{x_n\}, x_n \in \mathbb{R} \mbox{ and }x_n = 0 \mbox{ for } n > m \}
$$
and let $P_m : X \rightarrow X_m$ be defined as
$$
P_m (x_1,...,x_m,x_{m+1},...) = (x_1,...,x_m,0,...).
$$
It is easy to see that the above defined $X_m$ and $P_m$ satisfy the assumptions of Notation(\ref{nota}).
Observe that classical sequence spaces like $l_p$-spaces, Musielak-Orlicz sequence spaces (in particular Orlicz spaces) and Lorentz
sequence spaces can be constructed in the above presented manner. Hence Theorem(\ref{modifythm}) can be applied in these cases.
However, from Example 5.4 of \cite{mm-jm-mp-br}, we know that, Theorem(\ref{modifythm}) does not hold for the classical numerical index.
\end{rem}
Next, we show  a class of spaces for which analogous result to Theorem(\ref{modifythm}) holds for the classical numerical index.
\begin{defi}
Let $X, X_m$ and $P_m$ be as in the Notation(\ref{nota}) and $\|\,.\, \|_X $ denotes the norm on $X$. We say the norm $\|\,.\, \|_X $ satisfies the \emph{Characterization Condition}(CC) if and only if  for any $x \in X$ , with $\|x \|_X =1$ and $m \in \mathbb{N}$, if $x^*$ is a norming functional for $x$, then there exists a constant $b_m(x)$ such that
$b_m(x)\, x^*\mid_{X_m}$ is a norming functional for $P_m x$.
\end{defi}
 Above definition is motivated by the space $X= \ell_p$ with $1< p < \infty$. For $x\neq 0$ and $x \in \ell_p$,
form of the norming functional is $x^* = \displaystyle \frac{(|x_i|^{p-1} sgn(x_i))}{\|x\|_{p}^{p-1}}$ and clearly
$$
x^*\mid_{X_m} = \displaystyle \frac{(|x_i|^{p-1} sgn(x_i))}{\|x\|_{p}^{p-1}} \,\,\,\mbox{where}\,\,\,i\in\{1,2,\ldots ,m\}
$$
and the norming functional for $P_mx$, $(P_mx)^*$ takes the form
$$
(P_m x)^* = \displaystyle \frac{(|x_i|^{p-1} sgn(x_i))}{\|P_m x\|_{p}^{p-1}}\,\,\,\mbox{where}\,\,\, b_m(x) =\displaystyle \frac{\|x\|_{p}^{p-1}}{\|P_m x\|_{p}^{p-1}} .
$$
The above (CC) is also satisfied for norms of $\ell_1$ and $c_0$.
 The next theorem states that if the characterization condition is  satisfied then modified numerical radius is
equal to the classical one. Thus it becames important to give examples of spaces besides $\ell_p$ where characterization condition is satisfied. We present another example after the following theorem.
\begin{thm}
Let $X, X_m$ and $P_m$ be as in the Notation(\ref{nota}) and assume that $\|\cdot\|_X $  satisfies the characterization condition given above. Then, for any $L\in\mathcal{L}(X_m)$  and for $m \in\mathbb{N}$, $$n_1(L)= \nu(L).$$ Similarly, for any $L\in\mathcal{L}(X)$ we also have $n_1(L)= \nu(L)$. Furthermore,
$$
n(X) =\displaystyle\lim_m n(X_m).
$$
\end{thm}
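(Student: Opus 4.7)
The plan is to establish the pointwise identity $n_1(L)=\nu(L)$ for every $L\in\mathcal{L}(X_m)$ and every $L\in\mathcal{L}(X)$. Once this is known, taking infima over unit-norm operators gives $n_1(X_m)=n(X_m)$ and $n_1(X)=n(X)$, so the limit formula $n(X)=\lim_m n(X_m)$ is immediate from Theorem~\ref{modifythm}.

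The inequality $\nu(L)\le n_1(L)$ does not require (CC). For $L\in\mathcal{L}(X_m)$, given an admissible pair $(y,y^*)\in S_{X_m}\times S_{X_m^*}$ for $\nu(L)$, I Hahn--Banach extend $y^*$ to some $x^*\in S_{X^*}$, set $x=y$, and use $P_mx=x$ together with $x^*|_{X_m}=y^*$ to get $y^*(Ly)=x^*P_mLP_mx$, so $|y^*(Ly)|\le n_1(L)$. For $L\in\mathcal{L}(X)$, I take $x=y$, $x^*=y^*$ and let $j\to\infty$: the Banach--Steinhaus argument already used in the proof of Theorem~\ref{modifythm} gives $P_jLP_jx\to Lx$, hence $|y^*(Ly)|\le n_1(L)$.

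The reverse inequality $n_1(L)\le\nu(L)$ is the heart of the theorem and is where (CC) is used. Fix an admissible triple $(x,x^*,j)$ for $n_1(L)$; the case $P_jx=0$ is trivial, so I assume $u:=P_jx\ne 0$ and set $y:=u/\|u\|\in S_{X_j}$. By (CC) the functional $y^*:=b_j(x)\,x^*|_{X_j}$ is norming for $u$ on $X_j$, so its composition $\bar y^*:=y^*\circ P_j$ satisfies $\|\bar y^*\|\le 1$ and $\bar y^*(y)=y^*(y)=1$; thus $\bar y^*\in S_{X^*}$ (or $S_{X_m^*}$) and $(y,\bar y^*)$ is admissible for $\nu(L)$. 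Clause~(3) of Notation~\ref{nota} and $P_jLP_jx\in X_j$ yield the short computation
\[
\bar y^*(Ly)=\frac{y^*(P_jLP_jx)}{\|u\|}=\frac{b_j(x)\,x^*(P_jLP_jx)}{\|u\|},
\]
which combined with the (CC)-identity $b_j(x)\,x^*(u)=\|u\|$ rearranges to $|x^*P_jLP_jx|=|x^*(u)|\,|\bar y^*(Ly)|\le\nu(L)$, since $|x^*(u)|\le\|P_jx\|\le 1$. The principal difficulty is precisely this scalar bookkeeping: one must verify that the constant $b_j(x)$ furnished by (CC) is exactly the one that makes $\bar y^*$ unit norm while realizing $x^*P_jLP_jx$---up to the contractive factor $|x^*(P_jx)|$---as a value in the numerical range of $L$. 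Once this is in place, combining the two inequalities and invoking Theorem~\ref{modifythm} completes the proof.
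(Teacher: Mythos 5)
Your proof is correct and follows essentially the same route as the paper: the inequality $\nu(L)\le n_1(L)$ via Hahn--Banach extension (resp.\ the limit $P_jLP_jx\to Lx$), and the reverse inequality by using (CC) to turn $x^*\circ P_j$ into the norming functional $b_j(x)\,x^*|_{X_j}\circ P_j$ for $P_jx$, with the contractive factor $|x^*(P_jx)|=\|P_jx\|/|b_j(x)|\le 1$ absorbing the rescaling. The paper phrases this last step as a contradiction, but the underlying inequality chain is identical, and the concluding passage to $n(X)=\lim_m n(X_m)$ via Theorem~\ref{modifythm} is the same.
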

\begin{proof}
Take $L\in \mathcal{L}(X_m)$ with $\|L\|=1$, then
$$
\nu(L)= \mbox{sup} \{|x^*L \,x|:\,\, x\in S_{X_m},\,\, x^*\in S_{X_m^*},x^*(x)=1\}.
$$
However, $|x^* P_m L P_m x |= | x^* L\,x|$ implies that
$$ \nu(L) \leq \mbox{sup} \{ |x^* P_j L P_j x |:\,\,j\in\{1,2,\ldots,m\},\,\, x\in S_{X_m},\,\, x^* \in S_{X_m^*},\, \,x^*(x)=1\}
$$
$$
\leq\mbox{sup} \{ |x^* P_j L P_j x |:\,\,j\in\{1,2,\ldots,m\}\,\, x\in S_{X},\,\, x^* \in S_{X^*},\, \,x^*(x)=1\}= n_1(L).
$$
To prove the other inequality, assume $L \in \mathcal{L}(X_m)$ with $\|L\|=1$ and that $\nu(L) < n_1(L).$
By definition, there exists $x^*\in S_{X^*}$, $x\in S_{X},$ $x^*(x)=1$ and $j\in\{1,2,\ldots,m\}$ such that
$$ \nu(L) < |x^* P_j L P_j x |=|(x^*\mid_{X_j}\circ P_j\mid_{X_m}) L P_j x |\leq | b_j(x)x^*\mid_{X_j} P_j\mid_{X_m} L (\displaystyle\frac{P_jx}{\|P_jx\|})|\leq \nu(L),
$$
since $ b_j(x)\,x^*\mid_{X_j}\circ P_j\mid_{X_m} $ is a norming functional for $P_j x$ in $X_m$, thus we reached to a contradiction.
Note that to obtain the last inequality in the above equation we use the facts $b_j(x)\, x^* \mid_{X_j} = (P_j x)^*$ and \,\,\,$|\displaystyle \frac{b_j(x)}{\|P_j x\|}|\geq 1. $
\\
To prove $n_1(L)= \nu(L)$ when $L \in \mathcal{L}(X)$ with $\|L\|=1$, let $\epsilon >0$ be fixed, then for some $x^*\in S_{X^*},\, x\in S_{X}$ with $x^*(x)=1$, we have $|x^*L x|> n(L)-\epsilon$. However,
$$
|x^*L x|= \mbox{lim}_m | x^* P_mL P_mx |\leq \mbox{sup} \{| x^* P_mL P_mx |:\,\,m\in\mathbb{N},\, x^*(x)=1, \,\,\|x\|=\|x^*\|=1 \,\,\}=n_1(L)
$$
Thus we have
$$
\nu(L)-\epsilon \leq n_1(L)\,\,\,\mbox{for any }\,\,\, \epsilon>0 .
$$
To show the reverse inequality, assume $L \in \mathcal{L}(X)$ with $\|L\|=1$ with $\nu(L) < n_1(L)$.
Then $\nu(L) < | x^* P_mL P_mx |$ for some $x^*\in S_{X^*},\, x\in S_{X}$ with $x^*(x)=1$   and $m\in \mathbb{N}$, since $\frac{|b_m(x)|}{\|P_m x\|}\geq 1$, we have,
$$ \nu(L) < | (x^*\mid_{X_m}\circ P_m)L P_mx |\leq | (b_m(x)x^*\mid_{X_m} \circ P_m) L (\displaystyle\frac{P_mx}{\|P_mx\|})\leq \nu(L).
$$
Since $ b_m(x)x^*\mid_{X_m}\circ P_m$ is a norming functional for $P_m x$ in $X$,
we again reach to a contradiction. Therefore $\nu(L) =n_1(L)$ holds true.
\\
Since for any $L\in \mathcal{L}(X_m)$, with $\|L\|=1$ we have $\nu(L)=n_1(L)$ taking infimum over $L$ yields the equality $n(X_m)=n_1(X_m)$ and similarly for  $L\in \mathcal{L}(X)$, with $\|L\|=1$ we have $\nu(X)=n_1(X)$. By Theorem (\ref{modifythm}) and combining all of these equalities we have
$$
n(X) =\displaystyle\lim_m n(X_m).
$$
\end{proof}

\begin{exm}
\label{exCC}
Fix $p \in (1,\infty).$ Let $X =(\displaystyle \oplus_{i \in \mathbb{N}} X_i)_p$ be the direct $l^p$-sum of Banach spaces $(X_i, \parallel . \parallel_i)$, defined as
$$
X  = \{(x_1,\ldots, x_n, \ldots): \,\,\, x_i \in X_i \mbox{ and } \displaystyle\sum_{i=1}^{\infty} ( \parallel x_i \parallel_i)^p < \infty \}.
$$
Clearly, the norm $x \in X$ is
$$
\parallel x \parallel =  (\displaystyle\sum_{i=1}^{\infty} ( \parallel x_i \parallel_i)^p)^{1/p}
$$
and in case $X_i=X_j$
 for all $i,j \in \mathbb{N}$, then $X= \ell_p(X)$.
Next, we consider spaces $Z_m= X_1 \oplus \ldots \oplus X_m$ and the projections
$$
P_m(x_1, \ldots , x_m,x_{m+1}, \dots ) = ( x_1,x_2, \ldots x_m, 0, 0, \ldots).
$$
Note that all conditions in Notation(\ref{nota}) are satisfied for $X$ ,$Z_n$ and $P_m$. To show that characterization condition is satisfied for the norm on $X$, note that for any $x \in X\setminus\{0\} $ the norming functional has the form
$$
x^* = \displaystyle \frac{\left( \parallel x_i \parallel_i^{p-1} x_i^* ( . )\right )_{i= 1}^{\infty}}{\displaystyle\sum_{i=1}^{\infty} ( \parallel x_i \parallel_i)^p)^{\frac{p-1}{p}}}
$$
where $x_i^* \in X_i^*$ is a norming functional for $x_i \in X_i$. Setting $C = \displaystyle\sum_{i=1}^{\infty} ( \parallel x_i \parallel_i)^p) ^{\frac{p-1}{p}}$, to see  $\parallel x^* \parallel \leq 1$, let $y\in X$ be an element with $\parallel y \parallel =1$, then
$$
|x^* (y)| = | \displaystyle \frac{\sum_{i=1}^{\infty} \left( \parallel x_i \parallel_i\right)^{p-1} x_i^*(y_i)}{C}| \leq  \frac{1}{C}\displaystyle\sum_{i=1}^{\infty}
 \left( \parallel x_i \parallel_i)^{p-1}\right) |x_i^*(y_i)|.
$$
Applying the H\"{o}lder inequality with conjugate pairs $p$ and $q$ imply :
$$ |x^* (y)| \leq \frac{1}{C} \left[\displaystyle (\sum _{i=1}^{\infty} \parallel x_i\parallel_{i}^{p-1})^q\right]^{\frac{1}{q}} . \left[\displaystyle\sum_{i=1}^{\infty}\parallel y_i \parallel_i^p\right]^{\frac{1}{p}}.
$$
Since $q =\displaystyle \frac{p}{p-1}$ and $\parallel y \parallel =\left[\displaystyle\sum_{i=1}^{\infty}\parallel y_i \parallel^p\right]^{\frac{1}{p}} = 1$ we have $|x^*(y)| \leq 1$.
It is easy to see that $x^*$ is a norming functional for $x$ because
$$
x^* (x)= \frac{1}{C}\displaystyle \sum_{i=1}^{\infty} \parallel x_i \parallel_{i}^{p-1} x_i^*(x)= \displaystyle \frac{\parallel x \parallel^p}{\parallel x \parallel^{p-1}}= 1.
$$
Furthermore, from
$$ (P_mx)^* = \displaystyle\frac{\left( \parallel x_i \parallel_i^{p-1} x_i^*(.)\right)_{i=1}^{m}}{ \left(\displaystyle \sum_{i=1}^{m} \parallel x_i \parallel_i^p \right)^{\frac{p-1}{p}}}
$$
and  writing $x^*\mid_{Z_m}$ we obtain that $b_m(x)= \displaystyle \frac{\parallel x \parallel^{p-1}}{\parallel P_mx\parallel^{p-1}}$.
\end{exm}
 Next we define characterization condition locally and prove that for a Banach space $X$ with the local characterization condition
we also have the limit theorem for the classical numerical index.
\begin{defi}
\label{defLCC}
Let $X$ be a Banach space and $X_1\subset X_2\subset \cdots \subset X$ be its subspaces such that $X=
\displaystyle\overline{ \bigcup_{m=1}^{\infty}X_m}$.
Suppose for any $m\in \mathbb{N}$ there exists $P_m \in \mathcal{P}(X_{m+1}, X_m)$ with $\|P_m \| = 1$. We say the norm on $X$, $\parallel . \parallel_{X}$ satisfies the \emph{Local Characterization Condition} (LCC) if and only if for any $x \in X_{m+1}$ with $\parallel x \parallel =1$, if $x^* \in S_{X_{m+1}^*}$ is a norming functional for $x \in X_{m+1}$, then there exist a constant $b_m(x)\in \mathbb{R}$ such that $b_m(x) x^*\mid _{X_m}$ is a norming functional for $P_m x$ in $X_m^*.$
\end{defi}
We start by investigating some consequences of (LCC).
\begin{pro}
For a fixed $m\in \mathbb{N}$ and $L \in \mathcal{L}(X_m)$,define a sequence
$$
w_m(L)= \nu(L),\,\, w_{m+1}(L) = \nu(L\circ P_m), \ldots, w_{m+j} (L) = \nu (L \circ Q_{m,j}),
$$
where $Q_{m,j}= P_m \circ \ldots \circ P_{m+j-1}.$
If the norm on $X$ satisfies (LCC) then
$ \nu(L)=w_m(L)=w_{m+j} (L)$ for $j=1,2,\ldots$.
\end{pro}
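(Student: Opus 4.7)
The plan is to proceed by induction on $j$, with the base case $j=0$ holding by the very definition $w_m(L)=\nu(L)$. Setting $T_j=L\circ Q_{m,j}$, viewed as an element of $\mathcal{L}(X_{m+j})$ via the inclusion $X_m\subset X_{m+j}$, the identity $T_{j+1}=T_j\circ P_{m+j}$ reduces the inductive step to the following one-step assertion: for any $k\in\mathbb{N}$ and any $T\in\mathcal{L}(X_k)$,
\[
\nu(T\circ P_k)=\nu(T),
\]
where $T\circ P_k$ is regarded as an operator on $X_{k+1}$ (with range in $X_k$) and $\nu(T)$ is computed on $X_k$.

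For the easy direction $\nu(T)\le\nu(T\circ P_k)$ I would use Hahn--Banach: given $x\in S_{X_k}$ and $x^*\in S_{X_k^*}$ with $x^*(x)=1$, extend $x^*$ to $\tilde x^*\in S_{X_{k+1}^*}$. Because $P_k x=x$ for $x\in X_k$ and $Tx\in X_k$, we have $\tilde x^*(T P_k x)=x^*(Tx)$ and $\tilde x^*(x)=1$, so every term contributing to $\nu(T)$ arises as a term in the supremum defining $\nu(T\circ P_k)$.

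The main content is the reverse inequality $\nu(T\circ P_k)\le\nu(T)$, which is where (LCC) enters, in the same manner as in the proof of Theorem 2.5. Fix $x\in S_{X_{k+1}}$ and a norming functional $x^*\in S_{X_{k+1}^*}$ with $x^*(x)=1$; if $P_k x=0$ the corresponding term vanishes. Otherwise (LCC) produces $b_k(x)$ such that $b_k(x)\,x^*\mid_{X_k}\in S_{X_k^*}$ is a norming functional for $P_k x$, so the unit vector $y=P_k x/\|P_k x\|$ pairs admissibly with this functional in the supremum defining $\nu(T)$, giving $|b_k(x)\,x^*\mid_{X_k}(Ty)|\le\nu(T)$. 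Since $Ty\in X_k$, this rearranges to
\[
|x^*(T P_k x)|\le \frac{\|P_k x\|}{|b_k(x)|}\,\nu(T).
\]

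The final step is verifying $\|P_k x\|/|b_k(x)|\le 1$: the condition $\|b_k(x)\,x^*\mid_{X_k}\|=1$ yields $|b_k(x)|=1/\|x^*\mid_{X_k}\|\ge 1$, while $\|P_k x\|\le\|P_k\|\cdot\|x\|=1$. Taking the supremum over $x,x^*$ then gives $\nu(T\circ P_k)\le\nu(T)$, completing the lemma and hence the induction. I expect the only delicate point to be this bookkeeping for $b_k(x)$ --- checking that the scalar is at least $1$ in absolute value and cancels against $\|P_k x\|$ in the right direction --- which is precisely the mechanism already exploited in Theorem 2.5.
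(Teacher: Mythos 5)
Your proof is correct and follows essentially the same route as the paper's: the easy inequality via Hahn--Banach extension of norming functionals from $X_{m+j}$ to $X_{m+j+1}$, the reverse inequality via (LCC) by rescaling with $b_k(x)$ and checking $\|P_k x\|/|b_k(x)|\le 1$, and then induction on $j$. If anything, your write-up is slightly more careful than the paper's, since you explicitly treat the degenerate case $P_k x=0$ and explicitly verify $|b_k(x)|=1/\|x^*\mid_{X_k}\|\ge 1$.
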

\begin{proof}
Since $ X_m \subset X_{m+1}$ for any $m \in \mathbb{N},$ it is easy to see that $w_{m+j} (L)$ is an increasing sequence with respect to $j$, since
$$
w_{m+j}(L)= \sup\{ |x^* L \circ Q_{m,j} x |:\,\,x\in S_{X_{m+j}},\,\, x^*\in S_{X_{m+j}^*}, x^*(x)=1\}
$$
$$
\leq \sup\{ |x^* L Q_{m,j}P_{m+j} x |:\,\,x\in S_{X_{m+j+1}},\,\, x^*\in S_{X_{m+j+1}^*}, x^*(x)=1\} =w_{m+j+1}(L).
$$
Now consider $x^* \in X_{m+1}^*$ and $x \in X_{m+1}$ with $x^*(x) = \parallel x \parallel = \parallel x^* \parallel = 1$. Suppose the norm on $X$ satisfies (LCC), from the facts that $(P_mx)^* = b_m(x) x^*\mid _{X_m}$ and
$\left| \displaystyle\frac{b_m(x)}{\parallel P_mx \parallel }\right| \geq 1$ we have
$$
 | x^* L \circ P_m x | \leq \left| \displaystyle \frac{b_m(x)}{\parallel P_mx \parallel }\right|x^* L \circ P_m x |
= |(P_mx)^* L (\frac{P_m x }{\parallel P_m x \parallel}) | \leq \nu(L).
$$
Taking supremum over $x^*\in X^*_{m+1}$ and $x \in X_{m+1}$ we obtain $w_{m+1} (L)\leq \nu(L) = w_m (L)$ and thus $w_m(L)= w_{m+1}(L)$. Induction on $j$ results in $w_m(L)= w_{m+j}(L)$.
\end{proof}
\begin{pro}
Let $P_j \in \mathcal{P}(X_{j+1}, X_j) $ with $\parallel P_j \parallel = 1$. For a fixed $m \in \mathbb{N}$, define projections $Q_{m,j} \in \mathcal{P}(X_{m+j}, X_m) $ as
$ Q_{m,j} = P_m \circ P_{m+1} \circ \cdots \circ P_{m+j-1}$. Then
$$
 \displaystyle \lim_{j \rightarrow\infty} Q_{m,j} = Q_m
$$
where $Q_m \in \mathcal{P}(X, X_m) $ with $\parallel Q_m \parallel = 1$ and $X= \displaystyle\overline{ \bigcup_{m=1}^{\infty}X_m}$.
\end{pro}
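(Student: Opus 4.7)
The plan is to construct $Q_m$ first on the dense subspace $\bigcup_{k \geq m} X_k$ by a stabilization argument, show that it extends by continuity to a norm-one projection $X \to X_m$, and then deduce the claimed convergence.

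The key algebraic observation I would exploit is that if $x \in X_\ell$, then $P_\ell(x) = x$, simply because $P_\ell \in \mathcal{P}(X_{\ell+1}, X_\ell)$ is a projection onto $X_\ell$. Consequently, for $x \in X_k$ with $k \geq m$ and any $j$ with $m+j \geq k$, writing $Q_{m,j+1} = Q_{m,j} \circ P_{m+j}$ and using $x \in X_{m+j}$ gives $Q_{m,j+1}(x) = Q_{m,j}(x)$. Thus the sequence $\bigl(Q_{m,j}(x)\bigr)_{j \geq k-m}$ is eventually constant, and I would define $Q_m(x)$ to be this stabilized value in $X_m$. A quick check shows the definition does not depend on the choice of $k$ with $x \in X_k$ (apply stabilization again at a larger $k'$), so $Q_m$ is a well-defined linear map $\bigcup_{k \geq m} X_k \to X_m$.

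From here the rest is essentially bookkeeping. Each $Q_{m,j}$ is a composition of norm-one projections, hence $\|Q_{m,j}\| \leq 1$, so $\|Q_m(x)\| \leq \|x\|$ on the dense subspace. Extension by continuity yields $Q_m \in \mathcal{L}(X, X_m)$ with $\|Q_m\| \leq 1$. Since for $x \in X_m$ one may simply take $k = m$ and read off $Q_m(x) = x$, the restriction $Q_m|_{X_m}$ is the identity, so $Q_m \in \mathcal{P}(X, X_m)$ and in fact $\|Q_m\| = 1$. The convergence $Q_{m,j} \to Q_m$ then holds pointwise on $\bigcup_{k \geq m} X_k$ by stabilization, and extends to all of $X$ by a standard $\varepsilon/3$ argument using density and the uniform bound $\|Q_{m,j}\| \leq 1$.

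The one genuine subtlety I anticipate is notational rather than analytic: each $Q_{m,j}$ is initially defined only on $X_{m+j}$, not on all of $X$, so some care is needed in saying precisely what ``convergence of operators'' means (strong-operator convergence after natural extension, with the extended operator being exactly $Q_m$ once density has been used). Once the stabilization on $\bigcup_{k \geq m} X_k$ is in hand, this dissolves, and the whole argument is driven by the single identity $P_\ell|_{X_\ell} = \mathrm{id}_{X_\ell}$ applied repeatedly.
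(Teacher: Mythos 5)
Your proposal is correct and follows essentially the same route as the paper: the eventual constancy of $Q_{m,j}(x)$ for $x$ in the dense union (driven by $P_\ell|_{X_\ell}=\mathrm{id}$), followed by extension by continuity using the uniform bound $\|Q_{m,j}\|\leq 1$. You are in fact slightly more explicit than the paper about verifying the projection property of $Q_m$ and about the sense in which the operator convergence holds, but the argument is the same.
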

\begin{proof}
Let $ x \in \displaystyle{ \bigcup_{m=1}^{\infty}X_m}$,  then there is a minimal index $k$ such that $x \in X_k$. Choose an index $j_k$ such that $m+j_k-1\geq k.$ Note that $Q_{m,j}x = Q_{m, j_k}x$ for all $j \geq j_k$. This follows from the very definition of
$$
Q_{m,j}(x) = Q_{m,j_k} \circ (P_{m+j_k} \circ \cdots \circ P_{ m+j-1})(x)
$$
and the fact that $P_{m+j_{k}}$ is a projection onto $X_{m+j_{k}-1}$ with $X_k \subset X_{m+j_{k}-1}$ implying
$$
(P_{m+j_k} \circ P_{m+j_k+1} \cdots \circ P_{ m+j-1})(x)= x
$$.
Define the limit of the almost constant sequence $\{ Q_{m,j}x\} $ as $\displaystyle \lim_{j \rightarrow\infty} Q_{m,j}(x) = Q_m(x)$ for all $ x \in \displaystyle{ \bigcup_{m=1}^{\infty}X_m}$. Since a continuous, linear map defined on a dense subspace can be uniquely extended to the whole space, we can extend $Q_m$ uniquely to $X = \dis \overline{\bigcup_{m=1}^{\infty}X_m}$. It is clear that $Q_m \in \cal{P}(X,X_m)$
and $ \| Q_m\| =1.$
\end{proof}

\begin{pro}
For a fixed $m\in \mathbb{N}$ and $L \in \mathcal{L}(X_m)$ with $\parallel L \parallel =1$, we have
$$
w_{m+j} (L) \leq w_{m,\infty} (L)
$$
for all $j$, where $w_{m, \infty} (L)= \nu(L \circ Q_m)$.
\end{pro}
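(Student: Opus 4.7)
The plan is to lift every admissible pair $(x,x^{*})$ witnessing $\nu(L\circ Q_{m,j})$ on the subspace $X_{m+j}$ to an admissible pair $(x,\tilde x^{*})$ witnessing $\nu(L\circ Q_{m})$ on the ambient space $X$, producing exactly the same numerical value. The vector lifts for free through the inclusion $X_{m+j}\subset X$, while the functional is lifted by a norm-preserving Hahn--Banach extension. Taking suprema over admissible pairs then gives the inequality $w_{m+j}(L)\le w_{m,\infty}(L)$.

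Concretely, fix $x\in S_{X_{m+j}}$ and $x^{*}\in S_{X_{m+j}^{*}}$ with $x^{*}(x)=1$, and pick a Hahn--Banach extension $\tilde x^{*}\in S_{X^{*}}$ of $x^{*}$. Since $x\in X_{m+j}$, we still have $\tilde x^{*}(x)=x^{*}(x)=1$, so $(x,\tilde x^{*})$ is admissible for the numerical radius of $L\circ Q_{m}$ on $X$. The crucial point is the identity $Q_{m}x=Q_{m,j}x$ for every $x\in X_{m+j}$. This follows from the construction of $Q_{m}$ in the previous proposition: because $P_{m+i}$ is a norm-one projection onto $X_{m+i}\supset X_{m+j}$ whenever $i\ge j$, one has $P_{m+i}(x)=x$; iterating the defining relation $Q_{m,k+1}=Q_{m,k}\circ P_{m+k}$ yields $Q_{m,j+k}x=Q_{m,j}x$ for all $k\ge 0$, and passing to the limit gives $Q_{m}x=Q_{m,j}x$. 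Hence $L Q_{m}x=L Q_{m,j}x\in X_{m}\subset X_{m+j}$, and on this vector $\tilde x^{*}$ agrees with $x^{*}$. Consequently
\[
|x^{*}(L\circ Q_{m,j}\,x)|=|\tilde x^{*}(L\circ Q_{m}\,x)|\le \nu(L\circ Q_{m})=w_{m,\infty}(L),
\]
and taking the supremum over $(x,x^{*})$ yields $w_{m+j}(L)\le w_{m,\infty}(L)$.

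The main obstacle is bookkeeping rather than genuine analytic difficulty: one must be careful that the Hahn--Banach extension preserves both $\|\tilde x^{*}\|=1$ and the normalization $\tilde x^{*}(x)=1$ (both automatic), and that the restriction identities $P_{m+i}|_{X_{m+j}}=\mathrm{id}$ for $i\ge j$ are invoked correctly when collapsing the composition defining $Q_{m,j+k}$. Both points are essentially built into the hypotheses of Definition~\ref{defLCC} and the preceding proposition, so the argument is short and self-contained once the identity $Q_{m}|_{X_{m+j}}=Q_{m,j}$ has been isolated as the technical core.
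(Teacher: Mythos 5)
Your proof is correct and follows essentially the same route as the paper's: restrict/lift the admissible pairs from $X_{m+j}$ to $X$ and use the identity $Q_m|_{X_{m+j}} = Q_{m,j}$ coming from $P_{m+i}|_{X_{m+j}}=\mathrm{id}$ for $i\geq j$. You are in fact slightly more careful than the paper, which silently treats $S_{X_{m+j}^*}$ as a subset of $S_{X^*}$, whereas you make explicit both the Hahn--Banach extension $\tilde x^*$ and the reason it agrees with $x^*$ on $L Q_m x \in X_m \subset X_{m+j}$.
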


\begin{proof}
Since $w_{m,\infty} (L) = \sup\{|x^*L \circ Q_m x| : \,\, x\in S_X, \,\, x^* \in S_{X^*}, \,\,x^*(x)=1\}$, it is clear that
$$
w_{m, \infty} (L) \geq \sup\{|x^*L \circ Q_m x| : \,\, x\in S_{X_{m+j}}, \,\, x^* \in S_{X^*_{m+j}}, \,\,x^*(x)=1\}
$$
and that $Q_m (x)= Q_{m,j+1}(x)$ for any $ x \in X_{m+j},$ implies $w_{m, \infty} (L)\geq w_{m+j} (L)$.
\end{proof}

Note that from
$$
\nu(L) \leq w_m(L) \leq w_{m+j}(L) \leq w_{m, \infty} (L) \leq \parallel L \parallel =1
$$
we know that the sequence $\{w_{m+j}(L)\} $ converges to some number say $z_m(L)$, consequently we have $z_m(L) \leq w_{m,\infty} (L)$.
Now we show that for any $ m \in \mathbb{N},$ and any $ L \in \mathcal{L}(X), $
$$
w_{m,\infty}(L) = z_m(L).
$$

\begin{pro}
\label{crucial}
Let $ X$ be an infinite-dimensional Banach space and let $ Y \subseteq X$ be its linear subspace whose norm-closure is equal to $X.$
Let for $x \in S_X$
$$
N(x) = \{ x^* \in B_{X^*}: x^*(x) = \|x\|=1\}.
$$
Define for $ L \in \mathcal{L}(X),$
$$
n_2(L) = \sup \{ |x^*Lx| : x^* \in S_{X^*}, x \in S_Y, x^*(y)=1\}.
$$
Then $ \nu(L) = n_2(L).$
\end{pro}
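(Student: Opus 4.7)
The easy direction $n_2(L) \le \nu(L)$ is immediate, since every pair $(x, x^*)$ admissible for $n_2(L)$ (with $x \in S_Y$) is also admissible for $\nu(L)$, and so the supremum defining $n_2(L)$ is taken over a subset.

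For the reverse inequality, fix $\varepsilon > 0$ and choose $(x_0, x_0^*) \in S_X \times S_{X^*}$ with $x_0^*(x_0) = 1$ and $|x_0^*(Lx_0)| > \nu(L) - \varepsilon$. Using density of $Y$ in $X$, pick $y_n \in Y$ with $y_n \to x_0$ and, after rescaling, assume $y_n \in S_Y$; then $x_0^*(y_n) \to 1$. The plan is to construct, for each large $n$, a norming functional $y_n^* \in N(y_n)$ with $y_n^*(Ly_n) \to x_0^*(Lx_0)$. Splitting
\[
 y_n^*(Ly_n) - x_0^*(Lx_0) \;=\; y_n^*\bigl(L(y_n - x_0)\bigr) + (y_n^* - x_0^*)(Lx_0),
\]
the first summand is dominated by $\|L\|\,\|y_n - x_0\| \to 0$, so the task is to pick $y_n^*$ close to $x_0^*$ in the direction of $Lx_0$. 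My proposed tool is the Bishop--Phelps--Bollob\'as theorem: since $|x_0^*(y_n) - 1| \to 0$, BPB produces $(v_n, v_n^*) \in S_X \times S_{X^*}$ with $v_n^*(v_n) = 1$ and both $\|v_n - y_n\|$, $\|v_n^* - x_0^*\|$ arbitrarily small, whence $v_n^*(Lv_n) \to x_0^*(Lx_0)$ by a triangle-inequality computation.

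The \emph{main obstacle} is that BPB places $v_n$ in $S_X$, not in $S_Y$, so the pair $(v_n, v_n^*)$ is admissible for $\nu(L)$ but not for $n_2(L)$. I would close the gap by approximating $v_n$ afresh by $w_n \in S_Y$ with $\|w_n - v_n\|$ much smaller than the BPB parameters, and building a genuine norming functional $w_n^* \in N(w_n)$ via a Hahn--Banach extension on $\operatorname{span}\{w_n, Lx_0\}$ prescribed by $w_n \mapsto 1$ and $Lx_0 \mapsto x_0^*(Lx_0)$. The elementary estimate
\[
 |a + b\, x_0^*(Lx_0)| = |x_0^*(ax_0 + bLx_0)| \le \|aw_n + bLx_0\| + |a|\,\|w_n - x_0\|
\]
bounds the prescribed functional by $1 + \sigma_n$ with $\sigma_n \to 0$ (in the generic case $Lx_0 \notin \mathbb{R}\,x_0$; the alternative case $Lx_0 \in \mathbb{R}\,x_0$ is trivial since then $|y^*(Ly)| \approx |\lambda|$ for any $y \in S_Y$ near $x_0$ and any $y^* \in N(y)$). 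A small renormalization followed by a final BPB-type correction yields $w_n^* \in N(w_n)$ with $w_n^*(Lw_n) \to x_0^*(Lx_0)$; since $(w_n, w_n^*) \in S_Y \times N(w_n)$ is admissible for $n_2(L)$, we obtain $n_2(L) \ge \nu(L) - \varepsilon$, and letting $\varepsilon \to 0^+$ completes the proof.
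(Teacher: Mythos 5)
Your easy direction is fine, and the overall plan (approximate $x_0$ by points of $S_Y$, then manufacture norming functionals there with approximately the prescribed value on $L$) is the natural first attempt; the degenerate case $Lx_0\in\mathbb{R}x_0$ is also disposed of correctly. But the argument has a genuine gap at its last step, and that gap is essentially the whole content of the proposition. The pair you need must satisfy $w_n\in S_Y$, $\|w_n^*\|=1$ and $w_n^*(w_n)=1$ \emph{exactly}. Your Hahn--Banach functional $\phi_n$ on $\mathrm{span}\{w_n,Lx_0\}$ satisfies $\phi_n(w_n)=1$ but only $\|\phi_n\|\le 1+\sigma_n$; after normalization it lies in $S_{X^*}$ but no longer norms $w_n$, so the pair is merely ``$\sigma_n$-almost norming'' and is not admissible in the supremum defining $n_2(L)$. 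The proposed ``final BPB-type correction'' cannot repair this: Bishop--Phelps--Bollob\'as necessarily moves the \emph{point} as well as the functional, and the corrected point lands in $S_X$ rather than $S_Y$ --- which is precisely the obstacle you flagged one step earlier. A variant of BPB that fixes the point and moves only the functional is false in general Banach spaces: in $\ell_\infty^2$ the point $w=(1,1-\delta)$ has $N(w)=\{e_1^*\}$, while $\psi=e_2^*$ satisfies $\psi(w)=1-\delta$ and takes values on a given vector that differ from those of $e_1^*$ by an amount independent of $\delta$. So an almost-norming pair with $|\psi(Lw)|$ large need not be close to any exactly norming pair at the same point; and iterating ``apply BPB, then re-approximate from $Y$'' only produces a limit point in $\overline{Y}=X$, not in $Y$.

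The paper closes exactly this gap by a different mechanism: it argues by contradiction, assuming $|x^*(Lx)|>n_2(L)+\epsilon$, approximates the \emph{tilted} points $z_n=(x+(Lx)/n)/\|x+(Lx)/n\|$ by $y_n\in S_Y$ to accuracy $o(1/n)$, and takes \emph{arbitrary} Hahn--Banach norming functionals $y_n^*\in N(y_n)$, so that the pairs $(y_n,y_n^*)$ are genuinely admissible for $n_2$. The defining bound $|y_n^*(Ly_n)|\le n_2(L)$ then forces $y_n^*(Lx)<x^*(Lx)-2\epsilon/3$ for large $n$, hence $y_n^*(z_n)+\epsilon/2<x^*(z_n)$, which contradicts $y_n^*(y_n)=1\ge x^*(y_n)$ after comparing $y_n$ with $z_n$. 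In other words, rather than trying to build a norming functional with a prescribed value at $Lw$ (which is what fails above), one shows that the norming functionals one gets for free at suitably tilted approximants are \emph{forced} to have approximately the right value. If you want to keep your forward (non-contradiction) formulation, you would need to replace the Hahn--Banach/BPB step with this tilting device; as written, the proof does not go through.
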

\begin{proof}
Fix $ L \in \mathcal{L}(X).$ Notice that by definitions of $ \nu(\cdot)$ and $ n_2(\cdot), $ $ \nu(L)\geq n_2(L). $
Now, assume the contrary that there exists $ L \in \mathcal{L}(X), $ $ \| L \| =1,$ such that $ \nu(L) > n_2(L).$
Fix $ \epsilon > 0,$ $ x \in S_X \setminus Y$ and $ x^* \in S_{X^*}, $ with $ x^*(x)=1,$ such that
\begin{equation}
\label{inequality}
|x^*(Lx)| > n_2(L) + \epsilon.
\end{equation}
Define
$$
W(x) = \{ z^* \in N(x): \hbox{ there exist } \{ y_{\beta}\} \subset S_Y \hbox{ and } \{y_{\beta}^*\} \subset S_{X^*}, y_{\beta}^* \in N(y_{\beta}),
$$
$$
\| x - y_{\beta}\| \rightarrow 0,
y^*_{\beta} \rightarrow z^* \hbox{ weakly}^* \hbox{ in } X^* \}.
$$
First we show that $W(x)$ is a weak-$*$ closed subset of $ B_{X^*}.$ To do this, assume that  there exists a net $ \{z^*_\beta\} \subset W(x)$
converging to $ z^* \in B_X^*.$  In particular, this shows that $ z^*(x)=1, $ since $ z^*_{\beta}(x)=1$ for any $ \beta.$ Let $ V$ be any neighbourhood of $0$ in weak-$*$ topology of $ X^*.$
Then we can find a a weak-$*$ open neighbourhood of $0$ $ W \subset X^*$ such that $ W + W \subset V.$ Note that  for $ \beta \geq \beta_o,$ $ z^*-z_{\beta}^* \in W.$  Also for any $ \beta $ we can choose $ y_{\beta} \in S_Y$ and $ y_{\beta}^* \in N(y_{\beta}),$
such that $ \|y_{\beta} - x\| < 1/n$ and $ y_{\beta}^* - z_{\beta}^* \in W $ for $ \beta \geq \beta_1.$ Since $\{ \beta\}$ is a directed set, we can choose $ \beta_2 $ greater than $\beta_o$ and $\beta_1.$ Consequently, by the choice of $W,$ $z^* - y^*_{\beta} \in V$ for any
$\beta \geq \beta_2,$ which shows that $W(x)$ is a closed set.
\newline
By  above reasoning, (\ref{inequality}) and definition of $n_2(L)$ $ x^* \notin W(x) = cl(W(x)),$ where the closure is taken with respect to the weak-$*$ topology in $ X^*.$ Define for $n \in \mathbb{N}$
$$
z_n = \frac{x + (Lx)/n}{\|x + (Lx)/n\|}.
$$
Let $ y_n \in S_Y$ be so chosen such that $ \| y_n - z_n\| < \epsilon/(4n).$ Select for any $n \in \mathbb{N},$ $ y_n^* \in  N(y_n).$
Let $y^*$ be a cluster point of $ \{ y_n^* \}$ with respect to the weak-$*$ topology.
By definition of $ W(x)$ and (\ref{inequality})
\begin{equation}
\label{important}
|y^*(Lx)| + \epsilon \leq \limsup_n |y_n^*(Ly_n)| + \epsilon \leq  n_2(L) + \epsilon < |x^*(Lx)|.
\end{equation}
Without loss of generality, replacing $L$ by $-L,$ if necessary, we can assume that  $x^*(Lx) > 0.$ By (\ref{important}) above for $n \geq n_o,$
$$
y_n^*(Lx) + (2/3)\epsilon < x^*(Lx)
$$
and consequently, since $ \|y_n^*\| \leq 1,$
$$
y_n^*(x+(Lx)/n) + (2/3)\epsilon  < x^*(x +(Lx)/n).
$$
Hence
$$
y_n^*(z_n) + \frac{2\epsilon}{3\|x+(Lx)/n\|}  < x^*(z_n).
$$
Since $ \| x + (Lx)/n\| \rightarrow \|x\|=1,$
$$
y_n^*(z_n) + \epsilon/2  < x^*(z_n),
$$
for $n \geq n_1.$
Since $ \| y_n - z_n \| < \epsilon/4,$
$$
y_n^*(y_n) = y_n^*(y_n-z_n) + y_n^*(z_n) \leq \|y_n-z_n\| + y_n^*(z_n) < \epsilon/4 + y_n^*(z_n)
$$
$$
< x^*(z_n) - \epsilon/4 \leq x^*(y_n) + \|z_n - y_n\| - \epsilon/4 < x^*(y_n).
$$
Hence $ y_n^*(y_n) < x^*(y_n) $ for $ n\geq n_o, $ which leads to a contradicion, since $ y_n^*(y_n) = \|y_n\|=1$
and $ \|x^* \|=1.$
\end{proof}
\begin{cor}
\label{interesting}
For any $m \in \mathbb{N},$ $ z_m(L) = w_{m, \infty}(L).$
\end{cor}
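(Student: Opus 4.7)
The plan is to establish the reverse of the inequality $z_m(L) \leq w_{m,\infty}(L)$ noted just above the statement, using Proposition \ref{crucial} as the key tool. Apply that proposition with the dense subspace $Y = \bigcup_{k=1}^{\infty} X_k$ and the operator $\tilde{L} = L \circ Q_m \in \mathcal{L}(X)$; this yields
$$w_{m,\infty}(L) = \nu(L \circ Q_m) = \sup\{\, |x^*(L Q_m x)| : x^* \in S_{X^*},\ x \in S_Y,\ x^*(x) = 1\,\}.$$
Thus it suffices to bound each such quantity by $z_m(L)$.

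To produce the bound, fix admissible $(x, x^*)$ and let $k$ be the least index with $x \in X_k$. By the stabilization statement in the proposition constructing $Q_m$, there exists an index $j$ (which we may take so large that simultaneously $m + j \geq k$, hence $x \in X_{m+j}$) with $Q_m x = Q_{m,j} x$. The restriction $x^*|_{X_{m+j}}$ lies in $B_{X_{m+j}^*}$ and satisfies $(x^*|_{X_{m+j}})(x) = 1$, so it is a norming functional for $x$ in $X_{m+j}^*$. Consequently
$$|x^*(L Q_m x)| = |(x^*|_{X_{m+j}})(L Q_{m,j} x)| \leq \nu(L \circ Q_{m,j}) = w_{m+j}(L) \leq z_m(L).$$
Taking the supremum over admissible pairs gives $w_{m,\infty}(L) \leq z_m(L)$, and combined with the reverse inequality this yields the corollary.

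The only subtlety is that Proposition \ref{crucial} is stated for infinite-dimensional $X$. If $X$ is finite-dimensional, then $X = X_m$ for all sufficiently large $m$, the sequence $w_{m+j}(L)$ is eventually constant and equal to $\nu(L) = w_{m,\infty}(L)$, and the equality is immediate. So the real content is the density reduction supplied by Proposition \ref{crucial} together with the stabilization of $\{Q_{m,j} x\}$ on the dense union, both of which are already in place.
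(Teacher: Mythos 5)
Your proof is correct and follows essentially the same route as the paper: both arguments reduce $\nu(L\circ Q_m)=w_{m,\infty}(L)$ to a supremum over the dense union $Y=\bigcup_n X_n$ via Proposition~\ref{crucial}, and then use the stabilization $Q_m x = Q_{m,j}x$ together with restriction of norming functionals to $X_{m+j}$ to compare with $w_{m+j}(L)$. The only difference is organizational (you reuse the previously noted inequality $z_m(L)\leq w_{m,\infty}(L)$ instead of reproving that direction through $n_2$), and your aside on the finite-dimensional case is a harmless extra check.
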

\begin{proof}
 Let $ Y = \bigcup_{n=1}^{\infty} X_n.$ First we show that for any $ m \in \mathbb{N}$ and any $ L \in \mathcal{L}(X_m),$
$$
z_m(L) = lim_j w_{m+j}(L) = n_2(L\circ Q_m).
$$
Fix $ \epsilon > 0.$ Then there exists $ j \in \mathbb{N}$ such that
$$
z_m(L) < w_{m+j}(L) + \epsilon = \sup \{ |x^*LQ_{m,j+1}x|, x \in S_{X_{m+j}}, x^*\in S_{X_{m+j}^*}, x^*(x)=1 \} + \epsilon
$$
$$
\leq \sup \{ |x^*LQ_{m}x|, x \in S_{Y}, x^*\in S_{X^*}, x^*(x)=1 \} + \epsilon \leq n_2(L\circ Q_m) + \epsilon,
$$
which shows that $ z_m(L) \leq n_2(L\circ Q_m).$ On the other hand, by definition of $n_2(\cdot),$
for fixed $ m \in \mathbb{N}$ $ \epsilon >0,$ there exists $ j \in \mathbb{N},$ $x \in S_{X_{m+j}},$ $ x^* \in \S_{X_{m+j}^*}$ with
$x^*(x)=1$ such that
$$
n_2(L \circ Q_m)) \leq |x^*(LQ_m)x)| + \epsilon \leq |x^*(LQ_{m,j+1})x)| + \epsilon \leq w_{m+j}(L) \leq z_m(L),
$$
which shows that $n_2(L\circ Q_m) \leq z_m(L).$
Since by Proposition(\ref{crucial}), for any $ m \in \mathbb{N},$ $ n_2(L \circ Q_m) = \nu(L\circ Q_m) = w_{m, \infty}(L), $
$ z_m(L) = w_{\infty}(L),$ which completes the proof.
\end{proof}
\begin{pro}
\label{LCC}
Assume that $ \| \cdot \|_X$ satisfies (LCC). Then for any $ m \in \mathbb{N}, $
$$
n(X_m) \geq n(X).
$$
\end{pro}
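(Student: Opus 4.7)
The plan is to reduce the inequality to the defining property of $n(X)$: every operator on $X_m$ of unit norm will be extended to a unit-norm operator on $X$ that has the same numerical radius, so that $\nu(L) \geq n(X)$ follows immediately from the definition of the numerical index of $X$.

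Fix $L \in \mathcal{L}(X_m)$ with $\|L\|=1$ and consider the extension $\widetilde L = L \circ Q_m \in \mathcal{L}(X)$, where $Q_m \in \mathcal{P}(X,X_m)$ is the norm-one projection produced in the preceding proposition. The bound $\|\widetilde L\| \leq \|L\|\,\|Q_m\| = 1$ is immediate. Conversely, since $Q_m$ restricts to the identity on $X_m$, the operator $\widetilde L$ agrees with $L$ on $X_m$, whence $\|\widetilde L\| \geq \|L\|=1$. Applying the definition of $n(X)$ to the unit-norm operator $\widetilde L$ yields
$$
\nu(\widetilde L) \geq n(X).
$$

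The heart of the argument is then to identify $\nu(L)$ with $\nu(\widetilde L)$. Under the (LCC) hypothesis, the earlier proposition gives $\nu(L) = w_m(L) = w_{m+j}(L)$ for every $j \geq 1$, so the monotone sequence $\{w_{m+j}(L)\}_j$ is constant with common value $\nu(L)$; equivalently $z_m(L) = \nu(L)$. Corollary \ref{interesting} supplies the complementary identity $z_m(L) = w_{m,\infty}(L) = \nu(L \circ Q_m) = \nu(\widetilde L)$. Chaining these equalities we obtain $\nu(L) = \nu(\widetilde L) \geq n(X)$, and passing to the infimum over $L$ in the unit sphere of $\mathcal{L}(X_m)$ delivers $n(X_m) \geq n(X)$.

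Since the substantive machinery has already been developed in the preceding propositions, I do not anticipate any serious obstacle in this step. The only mildly delicate point is the norm identity $\|L \circ Q_m\| = 1$, which depends on $Q_m$ being a norm-one projection onto $X_m$ and $L$ being defined on $X_m$. All the force of (LCC) is absorbed into the key equality $w_{m+j}(L) = \nu(L)$ from the earlier proposition; once that is available, the present statement reduces to a single application of the definition of $n(X)$.
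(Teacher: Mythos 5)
Your argument is correct and follows essentially the same route as the paper: both proofs pass from $\nu(L)$ to $z_m(L)$ via the constancy of $w_{m+j}(L)$ under (LCC), then invoke Corollary \ref{interesting} to identify this with $\nu(L\circ Q_m)$ and bound it below by $n(X)$. Your explicit verification that $\|L\circ Q_m\|=1$ (using that $Q_m$ is the identity on $X_m$) is a small but worthwhile detail that the paper leaves implicit when it writes ``since $\|Q_m\|=1$.''
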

\begin{proof}
Fix $ \epsilon > 0,$  $m \in \mathbb{N}$ and choose $ L \in \mathcal{L}(X_m), $ $ \| L \| =1 $ such that $ n(X_m) + \epsilon > \nu(L).$
By (LCC), $ \nu(L) = z_m(L).$ By Corollary (\ref{interesting})
$$
z_m(L) = w_{m, \infty}(L) = \nu(L\circ Q_m) \geq n(X),
$$
since $ \| Q_m\| =1.$
We showed that $ n(X_m)+ \epsilon \geq n(X) $ for any $ \epsilon > 0.$ Hence $ n(X_m) \geq n(X),$ as required.
\end{proof}
\begin{thm}
\label{O.K}
Let $X$ and $X_m $ and $ P_m$ be as in Definition(\ref{defLCC}). Then
$$
n(X) = \displaystyle \lim_m n(X_m).
$$
\end{thm}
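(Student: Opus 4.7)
The plan is to combine Proposition~\ref{LCC}, which already yields $n(X_m) \geq n(X)$ for every $m$ and hence $\liminf_m n(X_m) \geq n(X)$, with a matching upper bound $\limsup_m n(X_m) \leq n(X)$. The strategy for the upper bound mimics the second half of the proof of Theorem~\ref{modifythm}, with the global norm-one projections $Q_m : X \to X_m$ (from the proposition preceding Corollary~\ref{interesting}) playing the role of the $P_m$ used there, and with a Hahn--Banach extension used to pass from $X_m^*$ to $X^*$ so as to control the classical numerical radius rather than the modified index $n_1$.

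Concretely, fix $S \in \mathcal{L}(X)$ with $\|S\| = 1$ and set $S_m := Q_m \circ S\mid_{X_m} \in \mathcal{L}(X_m)$. The first step is to show $\|S_m\| \to 1$. Since $Q_m z = z$ for every $z \in X_k$ once $m \geq k$, the density of $\bigcup_k X_k$ in $X$ together with $\|Q_m\|=1$ and the Banach--Steinhaus theorem yields $Q_m y \to y$ for every $y \in X$; approximating a near-extremal $x \in S_X$ by an element of $X_m$ and invoking the continuity of $S$ then gives the claim, exactly as in the proof of Theorem~\ref{modifythm} with $P_m$ replaced by $Q_m$.

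The key new step is the inequality $\nu(S_m) \leq \nu(S)$. Given an admissible triple $x \in S_{X_m}$, $x^* \in S_{X_m^*}$ with $x^*(x) = 1$, extend $x^*$ to $\tilde{x}^* \in S_{X^*}$ by Hahn--Banach and set $\phi := \tilde{x}^* \circ Q_m \in X^*$. Because $Q_m x = x$ for $x \in X_m$ we have $\phi(x) = \tilde{x}^*(x) = x^*(x) = 1$, and since $\|\phi\| \leq \|\tilde{x}^*\|\,\|Q_m\| = 1$ the pair $(\phi,x)$ is admissible in the definition of $\nu(S)$. As $S_m x = Q_m S x \in X_m$, one then obtains
$$
|x^*(S_m x)| = |\tilde{x}^*(Q_m S x)| = |\phi(S x)| \leq \nu(S),
$$
and taking the supremum over all admissible triples gives $\nu(S_m) \leq \nu(S)$.

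Combining these ingredients, for $m$ large enough that $\|S_m\| > 0$ we have $n(X_m) \leq \nu(S_m/\|S_m\|) = \nu(S_m)/\|S_m\| \leq \nu(S)/\|S_m\|$; letting $m \to \infty$ yields $\limsup_m n(X_m) \leq \nu(S)$, and taking the infimum over $S \in S_{\mathcal{L}(X)}$ completes the upper bound. I expect the main subtle point to be that Definition~\ref{defLCC} only provides $P_m$ defined on $X_{m+1}$, not on all of $X$, which forces one to work with the composite projections $Q_m$ throughout; once $Q_m \to \mathrm{id}$ pointwise is in hand, the Hahn--Banach trick above is what upgrades the $n_1$-style estimate of Theorem~\ref{modifythm} to a bound on the classical numerical radius $\nu$.
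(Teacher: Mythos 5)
Your proof is correct, but your treatment of the upper bound differs genuinely from the paper's. The paper disposes of $n(X) \leq \limsup_m n(X_m)$ in one line by citing Theorem 5.1 of \cite{mm-jm-mp-br} on numerical indices of absolute sums, whereas you prove it from scratch: given $S \in \mathcal{L}(X)$ of norm one you compress it to $S_m = Q_m \circ S\mid_{X_m}$, check $\|S_m\| \to 1$ via the pointwise convergence $Q_m \to \mathrm{id}$ (uniform boundedness plus density), and use the Hahn--Banach extension $\phi = \tilde{x}^*\circ Q_m$ to show that every admissible pair for $\nu(S_m)$ lifts to an admissible pair for $\nu(S)$ giving the same value, whence $\nu(S_m) \leq \nu(S)$ and $n(X_m) \leq \nu(S)/\|S_m\|$. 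The key identity $x^*(S_m x) = \tilde{x}^*(Q_m S x) = \phi(Sx)$ is justified because $Q_m S x \in X_m$ and $Q_m x = x$ for $x \in X_m$, and $\phi(x)=1$ with $\|\phi\|\le 1$ forces $\|\phi\|=1$, so the argument is sound. What your route buys is self-containment: it uses only the norm-one projections $Q_m$ constructed in the proposition preceding Corollary \ref{interesting}, so there is no need to verify that the present setting of nested subspaces with consecutive norm-one projections fits the absolute-sum framework of \cite{mm-jm-mp-br}; it also makes explicit that (LCC) is needed only for the lower bound, which you take from Proposition \ref{LCC} exactly as the paper does. What the paper's route buys is brevity, at the cost of an external hypothesis check.
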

\begin{proof}
By Proposition (\ref{LCC}), $ n(X_m) \geq n(X)$ for any $ m \in \mathbb{N}.$ Hence,
$$
\liminf_m n(X_m) \geq n(X).
$$
By Theorem 5.1 of \cite{mm-jm-mp-br}, we already know that
$$
n(X) \leq \limsup_m n(X_m),
$$
which proves the equality.
\end{proof}
Now we present an example of a Banach space $X$ satisfying condition (LCC) from Definition(\ref{defLCC}).
\begin{exm}
\label{main}
Let for $ n \in \mathbb{N}$ $(Y_n, \| \cdot \|_n)$ be a Banach space. Set $X_1 = Y_1$ and $X_n = X_{n-1} \oplus Y_n.$
Let for $ n \in \mathbb{N},$ let $ p_n \in [1, \infty).$
Define a norm $| \cdot |_1$ on $X_1$ by $ |x|_1 = \|x\|_1$ and a norm $ |\cdot|_2$ on $X_2$ by
$$
|(x_1,x_2)|_2 = (\|x_1\|_1^{p_1}+ \|x_2\|_2^{p_1})^{1/p_1},
$$
where $x_i \in Y_i$ for $i=1,2.$ Then having defined $ |\cdot |_n$ for $x=(x_1,...,x_n) \in X_n$ we can define $ | \cdot |_{n+1}$ on
$X_{n+1}$ by
$$
| (x,x_{n+1})|_{n+1} = (|x|_n^{p_n}+\|x_{n+1}\|_{n+1}^{p_n})^{1/p_n}.
$$
Note that if $x \in X_n,$ and $ m\geq n,$ then $ |x|_m = |x|_n.$
Let
$$
F= \{ \{y_n\}: y_n \in Y_n \hbox{ and } y_n = 0 \hbox{ whenever } n \geq m \mbox{ depending on } \{y_n\} \}.
$$
One can identify $F$ with $ \bigcup_{n=1}^{\infty} X_n,$ thus enabling us to define for $x \in F,$ its norm as:
$$
\|x\|_F = \lim_n |x|_n,
$$
because for fixed $x \in F$ the sequence $ |x|_n$ is constant from some point on by the above mentionned property.
Notice that completion of $ F$ (we will denote it by $X$) is equal to the space of all sequences $\{x_n\}$ such that  $x_n \in X_n$ and
$$
 \lim_n \|Q_nx\|_F = \sup_n \|Q_nx\|_F < +\infty ,
$$
where for $n \in \mathbb{N}$ and $ x=(x_1,x_2,...)$
$$
Q_n(x) = (x_1,...,x_n,0,...).
$$
Indeed, let $ \{x^s\}$ be a Cauchy sequence in $X.$ Notice that by definition of $ \| \cdot \|_F,$ $\|Q_n|_X\| =1.$ Hence for any
$ \epsilon >0,$ there exists $ N \in \mathbb{N}$ such that for any $s,k \geq N$ and $ n \in \mathbb{N},$
$$
|Q_n(x^s-x^k)\|_n \leq \epsilon.
$$
Consequently, for any $ n \in \mathbb{N},$ $Q_n(x^s)$ converges to some point in $ X_n.$ Hence for any $ i \in \mathbb{N}$
$ (x^s)_i \rightarrow x_i \in Y_i.$  Set $x=(x_1,x_2,...).$ Then, it is easy to see that $x \in X, $ since any Cauchy sequence is bounded and
$$
\|Q_n(x)\|_F = \lim_s\|Q_n(x^s)\|_F \leq \sup_s \|x^s\|_F < +\infty .
$$
Moreover, for fixed $\epsilon >0,$ for $s,k \geq N$ and any $ n \in \mathbb{N}, $
$$
\| Q_n(x^k - x^s)\|_F  \leq  \| x^s-x^k\|_F \leq  \epsilon.
$$
Hence fixing $ k \geq N$ and taking limit over s we get  for any $ n \in \mathbb{N},$
$$
\| Q_n(x^k -x)\|_F \leq \epsilon,
$$
and consequently $ \|x-x^k\|_X \leq \epsilon $ for $ k \geq N, $ which shows that $ \{ x^k\}$ converges to $x \in X.$
Hence $ X$ is a Banach space. Since for any $ x \in X,$ $ \lim_n\|Q_n(x)-x\|=0,$  $F $ is a dense subset of $X.$
Note that for any $ n \in \mathbb{N}$ a map $P_{n}:X_{n+1} \rightarrow  X_n$ given by
$$
P_{n}(x_1,...,x_n,x_{n+1}) = (x_1,...,x_n,0),
$$
is a linear projection of norm one. By Definition(\ref{defLCC}) and the proof from Example(\ref{exCC}), the condition (LCC) is satisfied for the norm on $X.$
\end{exm}
\begin{rem}
If  for any $ n \in \mathbb{N} $ $Y_n = \mathbb{R}$ and $ p_n = p \in [1,\infty)$ then the space $X$ from Example (\ref{main}) is equal to
$l^p.$ If $p_n=p\in [1,\infty)$ for any $ n \in \mathbb{N}$ and the Banach spaces $Y_i$ are arbitrary then
$$
X = Y_1 \otimes_p Y_2 \otimes_p Y_3 \otimes_p ...
$$
If $Y_n = Y$ for any $ n \in \mathbb{N}, $ then $X = l^p(Y).$
\end{rem}

\noindent
\mbox{~~~~~~~}Asuman G\"{u}ven AKSOY\\
\mbox{~~~~~~~}Claremont McKenna College\\
\mbox{~~~~~~~}Department of Mathematics\\
\mbox{~~~~~~~}Claremont, CA  91711, USA \\
\mbox{~~~~~~~}E-mail: aaksoy@cmc.edu \\ \\
\noindent
\mbox{~~~~~~~}Grzegorz LEWICKI\\
\mbox{~~~~~~~}Jagiellonian University\\
\mbox{~~~~~~~}Department of Mathematics\\
\mbox{~~~~~~~}\L ojasiewicza 6, 30-348, Poland\\
\mbox{~~~~~~~}E-mail: Grzegorz.Lewicki@im.uj.edu.pl\\\\

\end{document}